\setlist[enumerate,1]{label={\upshape(\roman*)}}
    \newcommand{\Rmnum}[1]
    {\expandafter\@slowromancap\romannumeral #1@}
\newtheorem{thm}{Theorem}[section]
\newtheorem{lemma}[thm]{Lemma}
\newtheorem{cor}[thm]{Corollary}
\newtheorem{example}[thm]{Example}
\newtheorem{defin}[thm]{Definition}
\theoremstyle{definition}
\newtheorem{remark}[thm]{Remark}
\title[Bound on shortest cycle covers]{Bound on shortest cycle covers}
\date{}
\author[Song]{Deping Song}
\address{School of Mathematical Sciences\\Zhejiang Normal University\\Jinhua, Zhejiang 321004\\China}
\email{songdeping@zjnu.edu.cn }
\author[Zhu]{Xuding Zhu}
\address{School of Mathematical Sciences\\Zhejiang Normal University\\Jinhua, Zhejiang 321004\\China}
\email{xdzhu@zjnu.edu.cn }
\begin{document}

\begin{abstract}
Assume $G$ is a bridgeless graph. A cycle cover of $G$ is a collection of cycles of $G$ such that each edge of $G$ is contained in at least one of the cycles. The length of a cycle cover of $G$ is the sum of the lengths of the cycles in the cover. The minimum length of a cycle cover of $G$ is denoted by $cc(G)$. It was proved independently by 
Alon and Tarsi   and by Bermond, Jackson, and Jaeger  that $cc(G)\le \frac{5}{3}m$ for every bridgeless graph $G$ with $m$ edges. This remained the best-known upper bound for $cc(G)$ for 40 years.
In this paper, we prove that if $G$ is a bridgeless graph with $m$ edges and $n_2$ vertices of degree $2$, then $cc(G) < \frac{29}{18}m+ \frac 1{18}n_2$. As a consequence, we show that $cc(G) \le  \frac 53 m - \frac 1{42} \log m$. 
The upper bound
$ cc(G) < \frac{29}{18}m \approx  1.6111 m$ for bridgeless graphs $G$ of minimum degree at least 3 improves the previous known upper bound $1.6258m$. A key lemma used in the proof confirms Fan's conjecture that if $C$ is a circuit of $G$ and $G/C$ admits a  nowhere zero 4-flow, then $G$ admits a 4-flow $f$ such that $E(G)-E(C)\subseteq \text{supp} (f)$ and $|\textrm{supp}(f)\cap E(C)|>\frac{3}{4}|E(C)|$.
\end{abstract}

\keywords{}

\maketitle
\section{Introduction}\label{sec1}

Graphs considered in this paper may have loops and multiple edges. For terminology and notations not defined here, we follow~\cite{Bondy,Zhang}. {\it Contracting an edge } means deleting the edge and then identifying its ends. For a subgraph $H$ of a graph $G$, let $G/H$ be the graph obtained from $G$ by contracting all edges of $H$.

A {\it circuit} is a $2$-regular connected graph, and a {\it cycle} is a  graph such that the degree of each vertex is even. The {\it length} of a cycle or a circuit is the number of its edges. A collection $\mathcal{C}$ of cycles of a graph $G$ {\it covers}  $G$ if each edge of $G$ is contained in at least one cycle in $\mathcal{C}$.  The {\it length} of a cycle cover $\mathcal{C}$, denoted by $\ell(\mathcal{C})$, is the sum of the lengths of its cycles. The length of the shortest cycle cover of $G$ is denoted by $cc(G)$.

Alon and Tarsi \cite{Alon} conjectured that $cc(G)\le \frac{7}{5}m$ for every bridgeless graph $G$ with $m$ edges.
Jamshy and Tarsi \cite{Jamshy} proved that Alon and Tarsi's conjecture  implies the well-known cycle double cover conjecture of Seymour \cite{Seymour} and Szekeres \cite{Szekeres},
 that every bridgeless graph has a collection of cycles that covers each edge exactly twice. 
 %The proof in \cite{Jamshy} actually shows that if $cc(G)\le \frac{7}{5}m$ for every bridgeless graph with $n_2 \le \frac{1}{10}m$, then the cycle double cover conjecture holds, where $n_2$ is the number of degree 2 vertices in $G$.

 Alon and Tarsi \cite{Alon} and Bermond, Jackson, and Jaeger \cite{Bermond} independently proved that $cc(G)\le \frac{5}{3}m$ for every bridgeless graph $G$ with $m$ edges. 
 This upper bound on $cc(G)$ remained the best known upper bound for 40 years. 
 For special families of graphs, there are some better upper bounds proved in the literature. 
 %%%%
  Jamshy, Raspaud,
and Tarsi \cite{JRT} proved that
if $G$ admits a nowhere-zero 5-flow, then $cc(G) \le \frac 85 m$.   Fan and Raspaud \cite{FR}  proved  that if the Fulkerson Conjecture is true, then $cc(G) \le 
\frac {22}{15}m$.   Kaiser, Kr\'{a}l, Lidick\'{y}, Nejedl\'{y} and \v{S}\'{a}mal  \cite{K} proved that if $G$ has minimum degree at least 3 and is loopless, then $cc(G)\le \frac{44}{27}m$.   Fan \cite{Fan} proved that if $G$ has minimum degree at least 3, then $cc(G)<\frac{218}{135}m$ in the loopless case and $cc(G)<\frac{278}{171}m$ if loops are allowed.
 Kompi\v{s}ov\'{a}     and Lukot'ka \cite{Lu7} proved that $cc(G)< \frac{278}{171}m+\frac{2}{27}n_2$ for an arbitrary bridgeless graph $G$, where $n_2$ is the number of degree 2 vertices in $G$.

This paper proves that $cc(G)<  \frac{29}{18}m+ \frac{1}{18}n_2$.
As a consequence, we show that $cc(G)\le \frac 53 m - \frac 1{42} \log m$, which improves the general upper bound for $cc(G)$ by a fraction of $\log m$. 

Let $G$ be a graph and $\Gamma$ be an (additive) abelian group.  For an orientation $D$  of $E(G)$, denote $E^+(v)$ (resp., $E^-(v)$) by the set of all oriented edges of $D(G)$ with their tails (resp., heads) at   vertex $v\in V(G)$. For a function $f: E(G)\to \Gamma$ and $v\in V(G)$, let $f^+(v)=\sum_{e\in E^+(v)}f(e)$ and $f^-(v)=\sum_{e\in E^-(v)}f(e)$. A {\it $\Gamma$-flow} in $G$ is an ordered pair $(D,f)$ such that $f^+(v)-f^-(v)=0$ for every vertex $v\in V(G)$. 
For a subgraph $H$ of $G$ and $a\in \Gamma$,  define  $E_{f=a}(H)=\left\lbrace e\in E(H):f(e)=a \right\rbrace $. 
The {\it support} of $f$ is the set of all edges of $G$ with $f(e)\ne 0$ and is denoted by $supp(f)$. A flow $(D,f)$  is called {\it nowhere-zero} if $supp(f)=E(G)$.

%For a positive integer $k$, a flow $f$ is called {\it $k$-flow} if $f$ is a $\mathbb{Z}$-flow such that $|f(e)|< k$ for each edge $e$. The {\it support} of $f$ is the set of all edges of $G$ with $f(e)\ne 0$ and is denoted by $supp(f)$. A flow $(D,f)$  is called {\it nowhere-zero} if $supp(f)=E(G)$.
%Tutte \cite{T} proved that
%a graph $G$ admits a flow nowhere-zero $k$ if and only if $G$ admits a flow nowhere-zero $\Gamma$-flow for any abelian group $\Gamma$ of order $k$.

We shall consider $\mathbb{Z}_2\times\mathbb{Z}_2$-flows in this paper.
As $-\alpha = 
\alpha$ for all $\alpha \in \mathbb{Z}_2\times\mathbb{Z}_2$, the orientation of $G$ is irrelevant and can be omitted. To be precise, a $Z_2 \times Z_2$-flow on $G$ is a function $f: E(G) \to Z_2 \times Z_2$ such that for each vertex $v$, $\sum_{e \in E(v)}f(e) = 0$, where $E(v)$ is the set of edges of $G$ incident to $v$.

\section{A key Lemma}

%Corollary \ref{main} was conjectured by Fan in \cite{Fan}, where it was proved that the conjecture holds if $|C| \le 19$. The conjectured were studied in \cite{W}, and \cite{WLG}.
A maximal path $P$ in a graph $H$ with all interior vertices of degree $2$ (in $H$) is called a {\em thread} in $H$.
This section proves the following lemma.

\begin{lemma} \label{main1}
	Assume $G$ is a graph and $C$ is a circuit in $G$. 
    Assume each edge in $C$ is contained in a thread of $G$ of length at least $q$.
    If $G$ admits a $\mathbb{Z}_2 \times \mathbb{Z}_2$-flow $f$, then $G$ admits a $\mathbb{Z}_2 \times \mathbb{Z}_2$-flow $g$ such that ${\rm supp}(f) - C = {\rm supp}(g) -C$ and  $ |E_{g=(0,0)}(C)|\le \frac{1}{4}(|E(C)|-q)$. 
\end{lemma}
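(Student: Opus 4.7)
The plan is to exhibit four $\mathbb{Z}_2\times\mathbb{Z}_2$-flows $g^{(1)},g^{(2)},g^{(3)},g^{(4)}$ in the admissible set
\[
\mathcal{H}:=\{g: \mathrm{supp}(g)\setminus E(C)=\mathrm{supp}(f)\setminus E(C)\},
\]
whose total number of $C$-edges mapped to $(0,0)$ is at most $|E(C)|-q$; averaging then forces the minimum count to be at most $(|E(C)|-q)/4$, as required.

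First I would record two structural facts. Any $\mathbb{Z}_2\times\mathbb{Z}_2$-flow is constant along each thread of $G$, because at a degree-two interior vertex the flow equation forces the two incident edge-values to coincide (as $-\alpha=\alpha$ in $\mathbb{Z}_2\times\mathbb{Z}_2$). Moreover, every thread of $G$ meeting $E(C)$ is entirely contained in $C$, by a short alternation argument at interior thread vertices. Consequently $C$ decomposes into thread segments $T_1,\ldots,T_k$ with $w_i:=|E(T_i)|\ge q$, and each flow $g$ has a well-defined value $g|_{T_i}\in\mathbb{Z}_2\times\mathbb{Z}_2$. Set $\alpha_i:=f|_{T_i}$.

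The baseline family is the four shifted flows $g_\beta:=f+\phi_\beta$ for $\beta\in\mathbb{Z}_2\times\mathbb{Z}_2$, where $\phi_\beta$ is the $C$-supported flow equal to $\beta$ on $E(C)$ and $0$ elsewhere. All four lie in $\mathcal{H}$, and for each $e\in E(C)$ one has $g_\beta(e)=0$ for precisely one $\beta$ (namely $\beta=f(e)$), so $\sum_\beta|E_{g_\beta=(0,0)}(C)|=|E(C)|$, yielding only the weaker bound $\min_\beta|E_{g_\beta=(0,0)}(C)|\le|E(C)|/4$. If the multiset $\{\alpha_i\}$ omits some value $\beta^{*}\in\mathbb{Z}_2\times\mathbb{Z}_2$, then $g_{\beta^{*}}$ is nowhere zero on $C$ and we are done at once; so the substantive case is $\{\alpha_1,\ldots,\alpha_k\}=\mathbb{Z}_2\times\mathbb{Z}_2$.

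In that case, pick a thread segment $T_j$ with $w_j\ge q$ and aim to replace $g_{\alpha_j}$ (the unique flow in the shifted family that vanishes on $T_j$) by a flow $g^{*}\in\mathcal{H}$ with $g^{*}|_{T_j}\ne 0$ and $g^{*}|_{T_i}=g_{\alpha_j}|_{T_i}$ for every $i\ne j$. The modified family $\{g_\beta:\beta\ne\alpha_j\}\cup\{g^{*}\}$ then has total zero-count $|E(C)|-w_j\le|E(C)|-q$, giving the claim. Writing $g^{*}=g_{\alpha_j}+h$, the required $h$ is any $\mathbb{Z}_2\times\mathbb{Z}_2$-flow on $G$ with $h|_{T_j}\ne 0$, $h|_{T_i}=0$ for all $i\ne j$, satisfying $h(e)=0$ on $\{f=0\}\setminus E(C)$ and $h(e)\ne f(e)$ on $\{f\ne 0\}\setminus E(C)$.

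The technical core of the lemma is the construction of $h$. The natural candidate is a circuit flow of value $\mu\in\mathbb{Z}_2\times\mathbb{Z}_2\setminus\{0\}$ supported on a circuit $D$ satisfying $D\cap E(C)=E(T_j)$, which is admissible provided $f|_{D\setminus E(T_j)}$ avoids $\{0,\mu\}$. In general, existence of $h$ amounts to a boundary-value flow problem on the outside support $\mathrm{supp}(f)\setminus E(C)$: one seeks a flow with prescribed demands at the two endpoints of $T_j$ that nowhere coincides with $f$. I expect the hard part of the argument to be showing that the flexibility in choosing $T_j$ (among the at least four thread segments guaranteed when $\{\alpha_i\}=\mathbb{Z}_2\times\mathbb{Z}_2$) together with local exchanges on the cycle space of the outside graph always produces such an $h$.
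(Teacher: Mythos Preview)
Your averaging framework is correct and matches the paper's final pigeonhole step: once one has four flows in $\mathcal{H}$ whose total zero-count on $C$ is at most $|E(C)|-q$, the bound follows. The reduction to producing a single ``savings'' flow $g^{*}$ that kills one thread's worth of zeros relative to $g_{\alpha_j}$ is also the right shape.

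The gap is that you stop exactly at the hard step: you do not construct $h$, you only say you ``expect'' it exists. That construction is the entire content of the lemma, so the proposal is incomplete. Two remarks sharpen this. First, requiring $h|_{T_i}=0$ for all $i\ne j$ is stronger than needed; it suffices that $g_{\alpha_j}+h$ acquire no new zeros on $C$, and the paper only achieves this weaker property. Second, and more importantly, the paper's construction is not a flexibility or local-exchange argument of the kind you sketch but an \emph{extremal} one. Over all flows in $\mathcal{H}$, choose $f$ maximizing the length of a subpath $S\subseteq C$ on which $f\in\{(0,0),(1,1)\}$; at an endpoint $v_1$ of $S$ the value on the next $C$-edge is, say, $(1,0)$. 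Since $E_{f=(1,0)}\cup E_{f=(1,1)}$ is a cycle in $G$, parity yields a path $P$ inside this cycle, lying in $G-E(C)$, from $v_1$ to some other $v_t\in V(C)$. Adding $(0,1)$ along $P$ preserves the outside support (values $(1,0)$ and $(1,1)$ are swapped). Maximality of $S$ then forces $v_t$ to lie inside $S$ (otherwise one of the two natural modifications lengthens $S$, contradicting the extremal choice). Taking $h$ to be the flow equal to $(0,1)$ on $P\cup P'$, where $P'$ is the $v_1$--$v_t$ arc inside $S$, and zero elsewhere gives the desired correction: on $P'$ it turns values $(0,0),(1,1)$ into $(0,1),(1,0)$, creating no new zeros while eliminating all zeros on the full thread through the edge $v_1v_2$, which has length at least $q$. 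This extremal-plus-parity device is precisely the missing idea.
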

\begin{proof}
Assume $f$ is a $\mathbb{Z}_2 \times \mathbb{Z}_2$-flow on $G$ and $C$ is a circuit. A subpath $S$ of $C$ is an $ f_{(0,0),(1,1)}$-\it segment \rm in $C$ if $f(e) \in \{(0,0),(1,1)\}$ for each edge $e \in S$. We choose $f$ and $S$ in such a way that for any $\mathbb{Z}_2\times \mathbb{Z}_2$-flow $g$ with ${\rm supp}(f) - C = {\rm supp}(g) -C$, and any $g_{(0,0), (1,1)}$-segment $S'$ in $C$, $|S'| \le |S|$. 

For $a\in \mathbb{Z}_2\times \mathbb{Z}_2$ and cycle $C'$, let $f_{C', a}$ be the function defined as $f_{C',a}(e)=a$ if $e \in E(C')$, and $f_{C',a}(e) = 0$ for other edges $e$.  Then $f_{C',a}$ is a $\mathbb{Z}_2\times \mathbb{Z}_2$-flow with ${\rm supp}(f_{C',a}) = E(C')$. 

If $S=C$, then let $g=f+f_{C,(0,1)}$. Then $g$ is a  $Z_2\times\mathbb{Z}_2$-flow 
on $G$ with ${\rm supp}(f) - C = {\rm supp}(g) -C$, and $E_{g=(0,0)}(C) = \emptyset$. Thus, we may assume that $S\ne C$.

	Assume $S=v_1v_2\cdots v_{s+1}$, and $C=(v_1v_2\ldots v_p)$ (so the other neighbor of $v_1$ in $C$ is $v_p$). Considering $f$ or $f+f_{C,(1,1)}$, we may suppose $f(v_1v_2)=(0,0)$.

	By the choice of $S$, we have $f(v_pv_1)\notin \left\lbrace (0,0),(1,1)\right\rbrace $. By symmetry, we may assume that $f(v_pv_1)=(1,0) $.

    It is well known and easy to verify that for any 2-subset $\{a,b\}$ of $\{(1,0), (0,1), (1,1)\}$, $E_{f=a}(G)\cup E_{f=b}(G)$ is a cycle. Let $C'=E_{f=(1,0)}(G)\cup E_{f=(1,1)}(G)$ and $A=C'-C$. Then $C'$ is a cycle and hence the set $O$ of all vertices of odd degree in $A$ is a subset of $V(C)$ and $v_1\in O$. Thus, there is a vertex $v_t\in O-\left\lbrace v_1\right\rbrace $ in the same component of $A$ as $v_1$. Let $P$ be the path in $A$ from $v_1$ to $v_t$, and $P'$ be the subpath of $C$ from $v_1$ to $v_t$ containing $v_1v_2$. Let $C_1$ be the cycle consisting of $P$ and $P'$, and $C_2=C\Delta C_1$. Let $g=f+f_{C_1,(0,1)}$, $h=f+f_{C_2,(0,1)}$. Then $g,h$ are $\mathbb{Z}_2\times \mathbb{Z}_2$ flows with ${\rm supp}(f) - C = {\rm supp}(g) -C = {\rm supp}(h) -C$. 
	
	If $t\ge s+1$, then $h(e)=f(e)$ for $e\in E(S)$ and $h(v_pv_1)=(1,1)$, and therefore $S'=(v_pv_1v_2\ldots v_{s+1})$ is a $h_{(0,0),(1,1)}$-segment in $C$ with $|S'| > |S|$, in contrary to our choice of $f$ and $S$.

	Assume $t\le s$. Then ${\rm supp}(f) \subseteq {\rm supp}(g)$. Note that the edge $v_1v_2$ is contained in a thread $P$ of length at least $q$. All the edges $e$ in $P$ have $f(e)=(0,0)$ and $g(e) = (0,1)$. Hence 
     $|E_{g=(0,0)}(C)| \le |E_{f=(0,0)}(C)| -q$.
     If $|E_{g=(0,0)}(C)| \le \frac 14 (|C|-q)$, then we are done. Otherwise $|E_{f=(0,0)}(G)| \ge \frac 14 |C|+ \frac 34 q$.
Thus, there exists $b\in \mathbb{Z}_2\times \mathbb{Z}_2 - \{(0,0)\}$ such that $|E_{f=b}(C)| \le \frac 14 (|C|-q)$. Let  $g=f+f_{C,b}$, then $|E_{g=(0,0)}(C)|\le \frac{1}{4}(|E(C)|-q)$.
	\end{proof}
%
%Similarly, Theorem $\ref{1.3}$ can be restated to Theorem $\ref{1.4}$ by Lemma \ref{111}.
%\begin{thm}\label{1.4}
%	\it Let $G$ be a graph with a circuit $C$.~If~$ |E(C)|\le 35$~and there is a $4$-flow  $(D,f)$~in $G$~such that $E(G)- E(C)\subseteq supp(f)$,~then there is a $4$-flow  $(D,g)$~in $G$ such that $E(G)- E(C)\subseteq supp(g)$ and $|E_{g=0}(C)|$ $< \frac{1}{4}|E(C)|$.
%\end{thm}

For a positive integer $k$, a   {\it $k$-flow}   is a $\mathbb{Z}$-flow $f$ such that $|f(e)|< k$ for each edge $e$.  Tutte \cite{T} proved that
a graph $G$ admits a flow nowhere-zero $k$ if and only if $G$ admits a flow nowhere-zero $\Gamma$-flow for any abelian group $\Gamma$ of order $k$. Thus the following is a corollary of Lemma \ref{main1}.

\begin{cor} \label{main}
	Assume $G$ is a graph and $C$ is a circuit in $G$. If $G/C$ admits a nowhere-zero $4$-flow, then $G$ admits a $4$-flow $f$ such that $E(G)-E(C)\subseteq supp(f)$ and $ |E_{f=0}(C)|<\frac{1}{4}|E(C)|$.
\end{cor}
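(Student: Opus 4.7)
My plan is to deduce this corollary from Lemma \ref{main1} together with Tutte's theorem, which says that a graph admits a nowhere-zero $k$-flow if and only if it admits a nowhere-zero $\Gamma$-flow for any abelian group $\Gamma$ of order $k$. The key observation is that every edge of $C$ is trivially contained in a thread of length at least $q=1$ (the edge itself forms a thread), so Lemma \ref{main1} applies with $q=1$ and yields $|E_{g=(0,0)}(C)|\le \frac{1}{4}(|E(C)|-1)<\frac{1}{4}|E(C)|$, which is precisely the strict bound in the corollary.

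The only step requiring a short argument is lifting a flow from $G/C$ back to $G$. First apply Tutte's theorem to convert the given nowhere-zero $4$-flow on $G/C$ into a nowhere-zero $\mathbb{Z}_2\times\mathbb{Z}_2$-flow $f'$ on $G/C$. Write $C = v_1v_2\cdots v_pv_1$ with edges $e_i=v_iv_{i+1}$, and set $a_{v_i} = \sum_{e\in E(v_i)\setminus E(C)} f'(e)\in \mathbb{Z}_2\times\mathbb{Z}_2$. Flow conservation at the contracted vertex of $G/C$ forces $\sum_i a_{v_i}=0$ (edges of $G$ with both endpoints on $C$ become loops in $G/C$ and contribute $0$ modulo $2$), so the cyclic linear system $f(e_{i-1})+f(e_i)=a_{v_i}$ is consistent; fixing $f(e_1)$ arbitrarily and propagating around $C$ determines the remaining values. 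Setting $f$ equal to $f'$ on every edge outside $C$ produces a $\mathbb{Z}_2\times\mathbb{Z}_2$-flow $f$ on $G$ with $E(G)-E(C)\subseteq \mathrm{supp}(f)$.

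Now apply Lemma \ref{main1} to $f$ and $C$ with $q=1$ to obtain a $\mathbb{Z}_2\times\mathbb{Z}_2$-flow $g$ on $G$ with $\mathrm{supp}(g)-C = \mathrm{supp}(f)-C\supseteq E(G)-E(C)$ and $|E_{g=(0,0)}(C)|\le \frac{1}{4}(|E(C)|-1)$. The restriction of $g$ to $\mathrm{supp}(g)$ is a nowhere-zero $\mathbb{Z}_2\times\mathbb{Z}_2$-flow on that subgraph, so Tutte's theorem produces a nowhere-zero $4$-flow on $\mathrm{supp}(g)$, which extends by assigning $0$ to all remaining edges of $G$ to a $4$-flow $f$ on $G$ having the same support as $g$. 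This $f$ satisfies $E(G)-E(C)\subseteq \mathrm{supp}(f)$ and $|E_{f=0}(C)| = |E_{g=(0,0)}(C)|<\frac{1}{4}|E(C)|$, as required.

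The main obstacle is the lifting in the middle paragraph, which relies on the vanishing of $\sum_i a_{v_i}$; everything else is routine translation through Tutte's theorem and a single invocation of Lemma \ref{main1} with the trivial thread bound $q=1$.
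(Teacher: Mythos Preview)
Your proposal is correct and follows exactly the approach the paper intends: the paper's entire argument for this corollary is the one-sentence remark that, by Tutte's equivalence between nowhere-zero $4$-flows and nowhere-zero $\mathbb{Z}_2\times\mathbb{Z}_2$-flows, the statement follows from Lemma~\ref{main1}. You have simply written out the routine details---lifting the $\mathbb{Z}_2\times\mathbb{Z}_2$-flow from $G/C$ to $G$, invoking Lemma~\ref{main1} with $q=1$, and translating back via Tutte---that the paper leaves implicit.
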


Corollary \ref{main} was conjectured by Fan \cite{Fan}, where it was confirmed for circuits $C$ with $|C| \le 19$, and  was further confirmed  for $|C| \le 27$ and for $|C| \le 35$  in \cite{WLG} and \cite{W}, respectively.

\section{Shortest cycle cover of brideless graphs}

This section proves the following bound of shortest cycle cover of general graphs. 

\begin{thm} \label{cc1}
	Let $G$ be a bridgeless graph with $m$ edges and $n_2$ vertices of degree $2$, then $cc(G)< \frac{29}{18}m+\frac{1}{18}n_2$ and consequently $cc(G)\le  \frac 53 m - \frac 1{42} \log m$, where  the base for $\log$ is 2. Moreover, if $m \ge 141$, then $cc(G) \le \frac 53 m - \frac {25}3$. 
\end{thm}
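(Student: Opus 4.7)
For the main inequality $cc(G)<\frac{29}{18}m+\frac{1}{18}n_2$, I would start from the classical construction behind the Alon--Tarsi / Bermond--Jackson--Jaeger $\frac{5}{3}m$ bound: a nowhere-zero 6-flow of $G$, viewed as a $\mathbb{Z}_2\times\mathbb{Z}_3$-flow, yields three even subgraphs whose union is $E(G)$ and whose total length is at most $\frac{5}{3}m$. To improve this by $\frac{1}{18}(m-n_2)$, I would locate a circuit $C$ in $G$ such that the contracted graph $G/C$ admits a nowhere-zero 4-flow, and then invoke Corollary~\ref{main} to extend this to a 4-flow $f$ on $G$ whose support contains $E(G)-E(C)$ together with more than $\frac{3}{4}|E(C)|$ edges of $C$. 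The $\mathbb{Z}_2\times\mathbb{Z}_2$-support of $f$ decomposes into two cycles $A,B$, and $\{A,B,C\}$ forms an alternative cycle cover of $G$. Edge-by-edge counting gives a saving of roughly $\frac{1}{4}|E(C)|$ compared with the naive triple, and aggregating such savings across a carefully chosen family of circuits --- whose combined length scales with $m-n_2$ --- yields the target bound.

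The principal difficulty I anticipate is ensuring that the accumulated saving is genuinely $\frac{1}{18}(m-n_2)$ rather than merely $\frac{1}{18}|E(C)|$ for a single $C$; the thread-length hypothesis of Lemma~\ref{main1} is crucial here, since edges buried inside short threads contribute nothing useful, so one must track the thread structure throughout the argument, perhaps by iterating the application of Corollary~\ref{main} across disjoint circuits and summing.

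For the consequences, observe that $\frac{29}{18}m+\frac{1}{18}n_2=\frac{5}{3}m-\frac{t}{18}$, where $t=m-n_2$ equals the number of threads of $G$ (equivalently, the number of edges of the graph obtained by suppressing all degree-2 vertices). For the $\frac{1}{42}\log m$ statement I would dichotomize on $t$: if $t\ge\frac{3}{7}\log m$ then the main inequality gives the bound directly; otherwise some thread has length exceeding $\frac{7m}{3\log m}$, and a cycle cover obtained by lifting a short cover of the suppressed graph (which has only $t$ edges) traverses this long thread a bounded number of times, producing a saving that dominates $\frac{1}{42}\log m$ for all but very small $m$, which are handled by direct inspection. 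The improved bound $cc(G)\le\frac{5}{3}m-\frac{25}{3}$ for $m\ge 141$ follows from the analogous case split with the constant threshold $t\ge 150$ replacing $t\ge\frac{3}{7}\log m$.
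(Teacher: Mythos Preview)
Your outline has the right flavour but misses the two structural steps that drive the paper's proof of the main inequality. First, you never explain how to obtain a circuit (or union of circuits) $C$ that is simultaneously \emph{large} and has $G/C$ admitting a nowhere-zero $4$-flow. In the paper this is not a single circuit but a $2$-factor $F$ with $|F|\ge\frac{2}{3}m$, obtained by reducing to a cubic graph and invoking Edmonds' perfect-matching polytope theorem; the complement of a suitably chosen perfect matching gives $F$, and then $G/F$ has odd edge-connectivity at least $5$, hence a nowhere-zero $4$-flow by Jaeger. Your proposal to ``iterate across disjoint circuits and sum'' does not work, because Corollary~\ref{main} requires $G/C$ to have a nowhere-zero $4$-flow for each $C$ separately, and there is no mechanism for combining the resulting flows. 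Second, a single cover of the type $\{A,B,C\}$ (essentially the paper's $\mathcal{C}_1=\{C_1\Delta F,C_2\Delta F,C_3\Delta F\}$) gives only $\ell(\mathcal{C}_1)\le 2m-\frac{1}{2}|F|-(\text{terms in the component counts }d_i)$; to reach $\frac{29}{18}m+\frac{1}{18}n_2$ the paper constructs a \emph{second} cover $\mathcal{C}_2=\{C_1,C_2,C_1\Delta F\}$ whose length is bounded above by $m+\frac{1}{2}|F|+(\text{terms in }d_i)+d_1-3$, and then takes the convex combination $\frac{5}{6}\ell(\mathcal{C}_1)+\frac{1}{6}\ell(\mathcal{C}_2)$ so that the $d_i$ terms cancel. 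The $n_2$ contribution enters only through $d_1\le\frac{1}{3}n_2$, not through any ``family of circuits whose combined length scales with $m-n_2$''.

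Your treatment of the consequences is also off. For $m\ge 141$ the paper does \emph{not} use a threshold on $t=m-n_2$: it invokes the Jamshy--Raspaud--Tarsi bound $cc(G)\le\frac{8}{5}m$ when $G$ has a nowhere-zero $5$-flow, and otherwise appeals to Kochol's result that any counterexample to the $5$-flow conjecture has girth at least $11$ (hence at least $94$ vertices of degree $\ge 3$), forcing $n_2\le m-141$. Your ``$t\ge 150$'' case split leaves the case $t<150$ unaddressed. For the $\frac{1}{42}\log m$ bound your dichotomy threshold $\frac{3}{7}\log m$ matches the paper, but in the small-$t$ case the paper does not lift a cover of the suppressed graph; it stays with $\mathcal{C}_1$ and uses the full strength of Lemma~\ref{main1} (the parameter $q$) on each component of $F$, locating a thread of length at least $5^{i_0}\log m$ to force $|E_{f=0}(C)|\le(\frac{1}{4}-\epsilon)|C|$.
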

\begin{proof}
We may assume that $G$ is loopless: if $e$ is a loop in $G$, then $G'=G-e$ has at most one more degree 2 vertex, and $cc(G) = cc(G')+1$. If Theorem \ref{cc1} holds for $G'$, then it holds for $G$ as well.

Let $\omega(e)=1$ for each edge in $G$. If there is a vertex $v$ of degree at least $4$, then by a result in \cite{Fleischner}, we can create a new bridgless graph $G'$ by splitting $v$ into   $v'$ and $v''$ so that $v''$ is adjacent to two neighbours of $v$ and $v'$ is adjacent to the remaining neighbours of $v$. 
We add an edge of weight $0$ that connects   $v'$ and $v''$.   Repetition of this operation results in an edge weighted graph $H$ whose maximum degree is at most $3$. Moreover, when all  edges of weight $0$ are removed, the resulting graph is bridgeless. 

 Let $\bar{H}$ be the graph obtained from $H$ by replacing each thread $P$ with a single edge $e_P$. Thus $\bar{H}$ is a bridgeless cubic graph. By \cite{ED}, there is an integer $p\in \mathbb{Z}^+$ and a family $\bar{\mathcal{M}}$ of $3p$ perfect matchings such that every edge $e\in E(\bar{H})$ is contained in $p$ members of $\mathcal{P}$.
For a perfect matching $\bar{M} \in \bar{\mathcal{M}}$, $\bar{F}=\bar{H}-\bar{M}$ is a cycle in $\bar{H}$.

For each edge $e_P$ in $\bar{H}$, replace $e_P$ by the path $P$, and contract all edges $e$ of weight $0$.  Each $\bar{F}$ corresponds to a cycle $F$ in $G$.
%, and each matching $\bar{M}$ corresponds to a linear forest $M$ in $G$. 
We denote by $\mathcal{F}$ the family cycles $F$ obtained from $\bar{F}$, and denote by $\mathcal{M}$ the family of subgraphs $M$ obtained from $\bar{M}$. Then  each edge $e$ of $G$ is contained in $2p$ cycles $F$ in $\mathcal{F}$, and contained in $p$ subgraphs in $\mathcal{M}$. The average size of $F$ is $2m/3$ and the average size of $M$ is $m/3$.

Let $F$ be a cycle in $\mathcal{F}$ of maximum size. Thus $|F| \ge \frac 23 m$.
It was shown in \cite{Zhang} that $G/F$ has odd-edge-connectivity at least $5$.
Hence $G/F$ have a nowhere-zero $\mathbb{Z}_2 \times \mathbb{Z}_2$-flow \cite{J}, which extends to a $\mathbb{Z}_2 \times \mathbb{Z}_2$-flow $f$ of $G$ with $E(G)-F \subseteq {\rm supp}(f)$.

By Lemma \ref{main1}, we may assume that for each component $B$ of $F$, 
$|E_{f=(0,0)}, B)| < \frac 14 |B|$. 
For $i \ge 2$, let $d_i$ be the number of components in $F$ with $i$ edges. 
Then  $$|E_{f=(0,0)}(G)|=|E_{f=(0,0)}(F)|\le \frac{1}{4}|F|-\frac{1}{4}\sum\limits_{i=1}^{\infty}d_{4i+1}-\frac{1}{2}\sum\limits_{i=0}^{\infty}d_{4i+2}-\frac{3}{4}\sum\limits_{i=0}^{\infty}d_{4i+3}-\sum\limits_{i=1}^{\infty}d_{4i}.$$ 

We choose the $\mathbb{Z}_2 \times \mathbb{Z}_2$-flow $f$ so that $|E_{f=(0,1)}(G-F)|$ is minimum, and subject to this, $|E_{f=(1,0)}(G-F)|$ is minimum.
This implies that $E_{f=(0,1)}(G-F)$ is acyclic, because if $C$ is a cycle in $E_{f=(0,1)}(G-F)$, then $f+f_{C, (1,0)}$ is a $\mathbb{Z}_2 \times \mathbb{Z}_2$-flow $g$ with ${\rm supp}(g)={\rm supp}(f)$ and $|E_{g=(0,1)}(G)| < | E_{f=(0,1)}(G)|$. Similarly,
$E_{f=(1,0)}(G-F)$ is acyclic. 

 Let $C_1=E_{f=(0,1)}(G) \cup E_{f=(1,0)}(G),C_2=E_{f=(0,1)}(G) \cup E_{f=(1,1)}(G),C_3=E_{f=(1,1)}(G) \cup E_{f=(1,0)}(G)$. Then $C_1, C_2,C_3$ is a double cover of $supp(f)$. Therefore $$\mathcal{C}_1= \{C_1\Delta F,C_2\Delta F,C_3\Delta F\}$$ is a cycle cover of $G$, where each edge not in $F$ is covered twice, and each edge $e \in F \cap {\rm supp}(f)$ is covered once  and each edge in $E_{f=0}(F)$ is covered three times. Thus $\mathcal{C}_1$ is a cycle cover of $G$ of 
length $2m-|F|+2|E_{f=0}(G)|$. Therefore 
\begin{eqnarray*}
    \ell(\mathcal{C}_1) &\le& 2m-|F|+2|E_{f=0}(G)| \\
    &\le& 2m-\frac{1}{2}|F|-\frac{1}{2}\sum\limits_{i=1}^{\infty}d_{4i+1}-\sum\limits_{i=0}^{\infty}d_{4i+2}-\frac{3}{2}\sum\limits_{i=0}^{\infty}d_{4i+3}-2\sum\limits_{i=1}^{\infty}d_{4i}.
\end{eqnarray*}

We may assume that for each component $B$ of $F$,  $|C_2 \cap B| \le \frac 12 |B|$, for otherwise we replace $C_2$ with the symmetric difference $C_2 \Delta B$. 
Thus $|C_2 \cap F| \le \frac 12 |F|- \frac12 \sum_{i=1}^{\infty} d_{2i+1}$.
Let $\mathcal{C}_2= \{C_1, C_2, C_1 \Delta F\}$. Then $\mathcal{C}_2$ is also a cycle cover of $G$. The length of this cycle cover is \begin{eqnarray*}
\ell(\mathcal{C}_2) &=& |C_1|+|C_2| + |C_1-F|+ |F-C_1| \\
&=& 2|C_1-F|+|C_2|+|F|  \\
&=& m +|C_1-F| + |C_2 \cap F| +|C_1 \cap C_2-F| \\
&\le& m + |C_1-F| + |C_1 \cap C_2 -F| + \frac 12 |F|- \frac 12 \sum_{i=1}^{\infty}d_{2i+1}.
\end{eqnarray*}
Let $C'_i=C_i - F$ for $i=1,2$. Then $\ell(\mathcal{C}_2) \le m +|C'_1-C'_2| + 2 |C'_1 \cap C'_2| + \frac12 |F| - \frac 12 \sum_{i=1}^{\infty}d_{2i+1}$. 

Note that $C'_i$ is a cycle in $G/F$, $C'_1-C'_2 = E_{f=(1,0)}(G-F)$, and $C'_1 \cap C'_2= E_{f=(0,1)}(G-F)$. Hence $C'_1-C'_2$ and $C'_1 \cap C'_2$ are acyclic.

 We view each of $C'_1-C'_2$ and $C'_1 \cap C'_2$ as a spanning subgraph of $G/F$. Let $c_1, c_2$ be the number of connected components of $C'_1-C'_2$ and $C'_1 \cap C'_2$, respectively. Then $|C'_1-C'_2 |= |V(G/F)| - c_1, |C'_1 \cap C'_2| = |V(G/F)| - c_2$. 

Let $d_1$ be the number of degree 2 vertices of $G$ contained in 
$G/F$. The choice of $F$ implies that $d_1 \le \frac 13 n_2$. 
A degree 2 vertex $v$ in $G/F$ is an isolated vertex in $C'_1 \cap C'_2$ if and only if the two edges $e_1,e_2$ incident to $v$ are have flow $f(e_1)=f(e_2)) \ne (0,1)$. 
Thus by the choice of $f$, at least $\frac 23 d_1$ vertices are isolated vertices in $C'_1 \cap C'_2$. Hence $c_2\ge \frac 23 d_1+1$. Similarly, $c_1+2c_2 \ge 3 \times \frac 23 d_1+3 = 2d_1+3$. 
As $|V(G/F)| = \sum_{i=1}^{\infty}d_i$, we conclude that  
\begin{eqnarray*}
   |C'_1-C'_2| + 2|C'_1 \cap C'_2|&\le&  3\sum\limits_{i=2}^{\infty}d_i+d_1-3.
 \end{eqnarray*}
 Therefore 

\begin{eqnarray*}
    \ell(\mathcal{C}_2) &\le & m+3\sum\limits_{i=2}^{\infty}d_i+d_1-3+\frac 12 |F| - \frac 12 \sum_{i=1}^{\infty} d_{2i+1}\\
&=&m+\frac{1}{2}|F|+\frac{5}{2}\sum\limits_{i=1}^{\infty}d_{4i+1}+3\sum\limits_{i=0}^{\infty}d_{4i+2}+\frac{5}{2}\sum\limits_{i=0}^{\infty}d_{4i+3}+3\sum\limits_{i=1}^{\infty}d_{4i}+d_1-3.
\end{eqnarray*}

Hence 
\begin{eqnarray*}
     cc(G)&\le &\min \{\ell(\mathcal{C}_1), \ell(\mathcal{C}_2)\} \\
     &\le &\frac{5}{6}\ell(\mathcal{C}_1)+\frac{1}{6}\ell(\mathcal{C}_2)\\
     &=& \frac{5}{6}(2m-\frac{1}{2}|F|-\frac{1}{2}\sum\limits_{i=1}^{\infty}d_{4i+1}-\sum\limits_{i=0}^{\infty}d_{4i+2}-\frac{3}{2}\sum\limits_{i=0}^{\infty}d_{4i+3}-2\sum\limits_{i=1}^{\infty}d_{4i})\\
     &+&\frac{1}{6}(m+\frac{1}{2}|F|+\frac{5}{2}\sum\limits_{i=1}^{\infty}d_{4i+1}+3\sum\limits_{i=0}^{\infty}d_{4i+2}+\frac{5}{2}\sum\limits_{i=0}^{\infty}d_{4i+3}+3\sum\limits_{i=1}^{\infty}d_{4i}+d_1-3)\\
     &\le& \frac{11}{6}m-\frac{1}{3}|F|+\frac{1}{6}d_1-\frac{1}{2}.
 \end{eqnarray*}
 As $|F| \ge \frac{2}{3}m$ and $d_1 \le \frac{1}{3}n_2$, we have $$cc(G)\le \frac{29}{18}m+\frac{1}{18}n_2-\frac{1}{2}.$$

It was proved by Jamshy, Raspaud and Tarsi \cite{JRT} that if $G$ admits a nowhere zero 5-flow, then $cc(G) \le \frac85 m$, and hence $cc(G)\le\frac{5}{3}m-\frac{25}{3}$, provided that $m \ge 125$.
Assume that $G$ does not admits a nowhere zero  5-flow, i.e., $G$ is a counterexample to Tutte's 5-flow conjecture. It was proved by Kochol \cite{Kochol} that every counterexample to the 5-flow conjecture has girth at least 11, and hence has at least 94 vertices. Note that a graph $G'$ admits a nowhere zero 5-flow if and only if by replacing each thread of $G'$ by a single edge, the resulting graph admits a nowhere zero 5-flow.  Thus we may assume that $G$ has at least $94$ vertices of degree at least $3$. Hence $94 \times 3 + 2n_2 \le 2m$ and $n_2 \le m - 141$, provided that $m \ge 141$.  This implies that if $m \ge 141$,
then $cc(G)\le\frac{5}{3}m-\frac{25}{3}$.

 It remains to show that $cc(G)\le\frac{5}{3}m - \frac 1{42} \log m$. 

As  
    $cc(G) \le \frac 53 m - \frac 1{18}(m-n_2)$, we are done if $ m-n_2 > \frac 3{7} \log m$. 

    Assume $m-n_2 \le \frac 3{7} \log m$.

Let $\mathcal{C}_1$ be the cycle cover of $G$ constructed above. We have shown that $$cc(G) \le \ell(\mathcal{C}_1) \le 2m- |F|+2|E_{f=(0,0)}(G)|,$$
where $|F| \ge \frac 23 m$.

Let $\epsilon = \frac{\log m}{56m}$. 
If $|E_{f=(0,0)}(G)| \le (\frac 14 - \epsilon)|F|$, then $$cc(G) \le 2m-(\frac 12 + 2\epsilon) |F| \le 2m-(\frac 12 + 2\epsilon) \frac 23 m = \frac 53 m - \frac 1{42}\log m.$$ 

Thus it suffices to show that for any connected component $C$ of $F$, 
$|E_{f=(0,0)}(C)| \le (\frac 14 - \epsilon)|C|$. 

If $|C| < \frac {14m}{\log m}$, then it follows from Corollary \ref{main} that $|E_{f=(0,0)}(C)| \le  \frac 14|C| - \frac14 < (\frac 14 - \epsilon)|C|$

Assume $|C|\ge \frac {14m}{\log m}$. 
Let $e_1, e_2, \ldots, e_k$ be the threads in $C$, with $\ell(e_i) \le \ell(e_{i+1})$ for all $i$. Thus $C$ has $k$ vertices of degree at least $3$. Hence $k \le 
\frac23(m-n_2) \le  \frac2{7}\log m$. 

 If $\ell(e_i) \le 5^i \log m$ for all $i$, then $ |C| \le \sum_{i=1}^k 5^i \log m \le \frac {5^{k+1}}{4} \log m$.
As $ k \le  \frac2{7}\log m$, 	$|C| \le \frac 54 m^{\frac{2log5}7} \log m$. This implies that $\frac{14 m}{\log m } \le \frac 54 m^{\frac{2\log 5}7} \log m$, hence $56m^{1-\frac{2\log5}7} \le  5(\log m)^2$, which fails for any positive integer $m$.

 Thus there exists $i$ such that $\ell(e_i) \ge 5^i \log m$.

Let $i_0$ be the minimum index such that 
$\ell(e_{i_0}) \ge 5^{i_0} \log m$. 

Let $S$ be the union of all the threads $e_1, e_2, \ldots, e_{i_0-1}$. Let $G'=G/S$ and $C' = C/S$. 
Let $q= 5^{i_0} \log m$. 
Then  each thread of $C'$ has length at least $q$.  By Corollary \ref{main}, we may assume that $|E_{f=(0,0)}(C')| \le \frac14 (|E(C')|-q)$.

%If $i_0=1$, then $C'=C$, and hence $|E_{f=(0,0)}(C)| \le (\frac 14 - \epsilon)|C|$. Assume $i_0 \ge 2$.
Then $|E_{f=(0,0)}(C)| \le \frac 14 (|E(C')| - q) + \sum_{i=1}^{i_0-1}5^i \log m \le \frac 14 |E(C)| - \frac {5}{4} \log m< (\frac 14 - \epsilon)|C|$. This completes the proof of Theorem \ref{cc1}.
\end{proof}

\section*{Acknowledgements}

X. Zhu is supported by grant: NSFC 12371359. 
% We  are grateful to Professor Hiroshi Suzuki for drawing our attention to \cite{JG14} and for valuable suggestions. Y.~Yang is supported by NSFC (12101575, 52377162) and the Fundamental Research Funds for the Central Universities (2652019319), K. Wang is supported by the National Key R$\&$D Program of China (No.~2020YFA0712900) and NSFC (12071039, 12131011).

\end{document}